\documentclass{amsart}
	\usepackage{aliascnt}
	\usepackage[colorlinks=true, linkcolor=black, citecolor=black, urlcolor=black]{hyperref}
	\usepackage{amssymb}	
	\usepackage{amsmath}
	\usepackage{amsthm}
	\usepackage{layout}

	\newaliascnt{lemma}{thm}
	\newtheorem{lemma}[lemma]{Lemma}  
	\aliascntresetthe{lemma}

	\newaliascnt{prop}{thm}
	\newtheorem{prop}[prop]{Proposition} 
	\aliascntresetthe{prop}

	\newaliascnt{cor}{thm}
	
	\aliascntresetthe{cor}

	\theoremstyle{remark}

	\newaliascnt{rem}{thm}
	\newtheorem{rem}[rem]{Remark}
	\aliascntresetthe{rem}

	\theoremstyle{definition}

	\newaliascnt{exm}{thm}
	
	\aliascntresetthe{exm}

	\newaliascnt{notn}{thm}
	
	\aliascntresetthe{notn}

	\newaliascnt{defn}{thm}
	\newtheorem{defn}[defn]{Definition}
	\aliascntresetthe{defn}

	\newcommand{\K}{\mathbb{K}}
	\newcommand{\rk}{\operatorname{rk}}
	\newcommand{\rmax}{\operatorname{r_{max}}}
	
	\newcommand{\Sy}{\operatorname{Sym}}
	\newcommand{\Ker}{\operatorname{Ker}}
	\newcommand{\Hom}{\operatorname{Hom}}
	\newcommand{\Ps}{\mathbb{P}}
	\newcommand{\Span}[1]{\left\langle\,#1\,\right\rangle}
	\newcommand{\blen}{\operatorname{b\ell}}
	
\begin{document}

\title[$\ldots$ leading term for maximum rank of ternary forms $\ldots$]{The asymptotic leading term for maximum\\rank of ternary forms of a given degree}
\author{Alessandro De Paris}
\address{Dipartimento di Matematica e Applicazioni ``Renato Caccioppoli'',\newline\indent Universit\`a di Napoli Fe\-de\-ri\-co II (Italy)}
\email{deparis@unina.it}

\begin{abstract}
Let $\rmax(n,d)$ be the maximum Waring rank for the set of \emph{all} homogeneous polynomials of degree $d>0$ in $n$ indeterminates with coefficients in an algebraically closed field of characteristic zero. To our knowledge, when $n,d\ge 3$, the value of $\rmax(n,d)$ is known only for $(n,d)=(3,3),(3,4),(3,5),(4,3)$. We prove that $\rmax(3,d)=d^2/4+O(d)$ as a consequence of the upper bound $\rmax(3,d)\le\left\lfloor\left(d^2+6d+1\right)/4\right\rfloor$.
\end{abstract}

\maketitle

\textbf{Keywords:} Waring problem, rank, symmetric tensor.

\medskip
\textbf{MSC2010:} 15A21, 15A69, 15A72, 14A25, 14N05, 14N15.

\section{Introduction}

A natural kind of Waring problem asks for the least of the numbers $r$ such that \emph{every} homogeneous polynomial of degree $d>0$ in $n$ indeterminates can be written as a sum of $r$ $d$th powers of linear forms. For instance, when $(n,d)=(3,4)$ (and the coefficients are taken in an algebraically closed field of characteristic zero), the answer is $7$. This was found for the first time in \cite{K}. In view of the interplay with the rank of tensors, relevant applicative interests of questions like this have recently been recognized (see \cite{L}). For further information we refer the reader  to \cite[Introduction]{T}.

Every power sum decomposition gives rise to a set of points in the projectivized space of linear forms, and in \cite{D1} it is proved that for ternary quartics one can always obtain a power sum decomposition by considering seven points arranged along three lines. In \cite{D2}, considering sets of points arranged along four lines, one finds that every ternary quintic is a sum of $10$ fifth powers of linear forms. Ternary quintics without power sum decompositions with less than $10$ summands were exhibited soon after in~\cite{BuT}. Hence, the answer in the case $(n,d)=(3,5)$ is $10$.

In the present paper we test ``at infinity'' the technique of arranging decompositions of ternary forms along lines. More precisely, let $\rmax(n,d)$ denote the desired answer to the mentioned Waring problem. Elementary considerations show that for each fixed $n$, $\rmax(n,d)=O\left(d^{n-1}\right)$, and if $\rmax(n,d)=c_nd^{n-1}+O\left(d^{n-2}\right)$ for some constant $c_n$ (as it is reasonable to expect), then it must be $1/n!\le c_n\le 1/(n-1)!$.  The best general upper bound on $\rmax(n,d)$ to our knowledge is given by \cite[Corollary~9]{BlT}. This implies that the constant $c_n$ (if it exists) is at most $2/n!$. Using \cite[Proposition~4.1]{CCG} (see also \cite[Theorem~7]{BBT}, \cite[Theorem~1]{BuT}), we deduce $\rmax(3,d)\ge\left\lfloor\left(d+1\right)^2/4\right\rfloor$. Hence, it must be $1/4\le c_3\le 1/3$. In the present work, for all ternary forms of degree $d$ we obtain power sum decompositions by considering $\left\lfloor\left(d^2+6d+1\right)/4\right\rfloor$ points arranged along $d$ lines. Hence, we have $\rmax(3,d)=d^2/4+O(d)$, that is, $c_3=1/4$.

The upper bound we are proving lowers the general upper bound \cite[Corollary~9]{BlT} in the special case $n=3$ and for $d\ge 6$. Nevertheless, it is not the best we can achieve because our purpose here was to determine the asymptotic leading term as simply as we could. To explain how the method works and why the resulting bound can ulteriorly be lowered, let us consider what happens for a ternary quartic~$f$. For introductory purposes, we now use a geometric language; the technical heart of the paper will be elementary linear algebraic instead. We view our quartic as a point $\Span{f}$ in the $14$-dimensional projective space of all quartic forms, where fourth powers make a degree $16$ Veronese surface. That surface is isomorphic to a plane via quadruple embedding, and exploiting apolarity we get four lines, which embed as rational normal quartics. The four curves are chosen so that their span contains $\Span{f}$, but no three of them do the same. Then, by means of successive projections and liftings we get a sequence of essentially binary forms that easily handle power sum decompositions. More precisely, we successively consider decompositions of binary forms of degrees $1,2,3,4$, with respective lengths $2,2,3,3$. Thus $\rk f\le 10$. This bound is rather relaxed since $\rmax(3,4)=7$. Note, however, that generic ranks of binary forms of degrees $1,2,3,4$ are $1,2,2,3$. Moreover, if one uses \cite[Proposition~2.7]{BD} instead of \autoref{Lines} here, one gets three (or fewer) lines instead of four. With three lines, the binary forms involved are of degrees $2,3,4$, and the corresponding generic ranks are $2,2,3$. This way, with a few additional technical cautions, we can reach the value of $\rmax(3,4)$. Similarly, we can reach $\rmax(3,5)=10$ in a simpler way than in \cite{D2}. For ternary sextics and septics, it is reasonabe to expect that the bounds $\rmax(3,6)\le 14$, $\rmax(3,7)\le 18$ can be proved with a more or less straightforward extension of the method.  However, in the present work we prefer not to set up in detail these results about low-degree forms because there are also reasons to believe that to reach $\rmax(3,d)$, further considerations could be in order (maybe an enhanced choice of the lines, if not a completely different strategy). We now outline what these reasons are.

When the present paper was in preparation, a log cabin patchwork like the following was shown to us (\footnote{As strange as it seems, during the lunch break on October 7, 2015, the TV was on and at a certain point the patchwork was shown as a tutorial about sewing in the program ``Detto Fatto'', broadcast by the national Italian channel RAI 2.}):
\begin{figure}[h]
\setlength{\unitlength}{.5cm}
\begin{picture}(5,5)
\put(0,0){\line(1,0){5}}
\put(5,0){\line(0,1){5}}
\put(5,5){\line(-1,0){4}}
\put(0,1){\line(0,-1){1}}
\put(0,1){\line(1,0){4}}
\put(4,0){\line(0,1){4}}
\put(5,4){\line(-1,0){3}}
\put(1,5){\line(0,-1){4}}
\put(1,2){\line(1,0){2}}
\put(3,1){\line(0,1){2}}
\put(4,3){\line(-1,0){2}}
\put(2,5){\line(0,-1){3}}
\end{picture}
\caption{}
\end{figure}

The area of the patches, starting from the center, makes a sequence \[1,2,2,2,3,3,4,4,\ldots\;.\] The partial sums are \[1,3,5,7,10,13,17,21,\ldots\;.\] The first five partial sums agree with the values of $\rmax(3,d)$, $d=1,\ldots,5$ that are known at the time of writing. The picture also clearly shows that the area is asymptotically $d^2/4$. This suggests that $\rmax(3,d)$ could be $\left\lfloor\left(d^2+2d+5\right)/4\right\rfloor$ for $d\ge 2$. This would mean that \cite[Theorem~1]{BuT} is the best that one can achieve for $n=3$ and odd $d\ge 3$, and that for even degrees one should be able to raise by one the rank reached by monomials (like \cite[Theorem~1]{BuT} for odd degrees).

The picture suggests how to build sets of points that may give rise to forms with the desired rank. On the other hand, at the moment we do not know how our technique for upper bounds could be improved. To decide for the values $12\le\rmax(3,6)\le 14$ and $17\le\rmax(3,7)\le 18$, would indicate whether or not the ``patchwork conjecture'' is more promising than a likewise straightforward application of the method of the present article. In any case, we acknowledge that the patchwork helped us recognize that, for the purposes of the present work, to consider $d$ lines (\autoref{Lines}) makes things simpler than considering $d-1$ lines (\cite[Proposition~2.7]{BD}).

\section{Preparation}
 
We work over an algebraically closed field $\K$ of characteristic zero and fix two symmetric $\K$-algebras $S^\bullet=\Sy^\bullet S^1$, $S_\bullet=\Sy^\bullet S_1$; we shall keep this notation throughout the paper. We also assume that an \emph{apolarity pairing} between $S^\bullet,S_\bullet$ is given. It is naturally induced by a perfect pairing $S^1\times S_1\to\K$ (for more details see \cite[Introduction]{D1}). This amounts to say that $S^\bullet$, $S_\bullet$ are rings of polynomials in a finite and the same number of indeterminates, acting on each other by constant coefficients partial differentiation. For each $x\in S^\bullet$ and $f\in S_\bullet$ we shall denote by $\partial_xf$ the apolarity action of $x$ on $f$. For each form (homogeneous polynomial) $f\in S_{d+\delta}$, we shall denote by $f_{\delta,d}$ the \emph{partial polarization} map $S^\delta\to S_d$ defined by $f_{\delta,d}(x):=\partial_xf$. The \emph{apolar ideal} of $f\in S_d$ is the set of all $x\in S^\bullet$ such that $\partial_xf=0$. We also define the evaluation of a homogeneous form $x\in S^d$ on a linear form $v\in S_1$, by setting
\[
x(v):=\frac{\partial_xv^d}{d!}\,.
\]
The (Waring) \emph{rank} of $f\in S_d$, $d>0$, denoted by $\rk f$, is the least of the numbers $r$ such that $f$ can be written as a sum of $r$ $d$th powers of forms in $S_1$ (\footnote{Since we are assuming that $\K$ is algebraically closed, when $d>0$ a form $f\in S_d$ is a sum of $r$ $d$th powers of linear forms if and only if it is a linear combination of $r$ $d$th powers of linear forms. Using linear combinations allows one to define Waring rank in degree $0$ as well, and of course it would be $1$ for every nonzero constant. We prefer not to decide here whether the rank of a nonzero constant should be $1$ or be left undefined.}); $\rmax(n,d)$ is the maximum of the ranks of all such $f$ when $\dim S_1=n$. The span of $v_1,\ldots,v_r$ in some vector space $V$ will be denoted by $\Span{v_1,\ldots,v_r}$, and the projective space made of all one-dimensional subspaces $\Span{v}\subseteq V$, $v\ne 0$, will be denoted by $\Ps V$. A \emph{morphism of projective spaces $\Ps\varphi:\Ps V\smallsetminus\Ps\Ker\varphi\to\Ps W$} is a map determined by a linear map $\varphi:V\to W$ by setting $\Ps\varphi\left(\Span{v}\right):=\Span{\varphi(v)}$. The sign $\perp$ will refer to orthogonality with respect to the apolarity pairing $S^d\times S_d\to\K$, when some degree $d$ is fixed (sometimes implicitly).

In \cite[Sec.~1.3]{IK}, building on classical results due to Sylvester, the authors deal with binary forms (i.e. $\dim S_1=2$, in our notation). They show that power sum decompositions are closely related with the initial degree of the (homogeneous) apolar ideal, that is, the least degree of a nonzero homogeneous element of that ideal. That is the notion of \emph{length} of a binary form (see \cite[Def.~1.32 and Lemma~1.33]{IK}), which can be generalized in various ways for forms in more indeterminates: see \cite[Def.~5.66]{IK}. Nowadays, terms related to length are replaced by similar terms related with rank, probably because of the renewed interest in the interplay with the rank of tensors. In the present paper we need that notion only when the form is essentially binary, and what we really use is only its algebraic property of being the initial degree of the apolar ideal in a ring of binary forms. Note that a form $f\in S_d$ belongs to some subring $T_\bullet=\Sy^\bullet T_1$ with $\dim T_1=2$, if and only if $\Ker f_{1,d-1}$ has codimension at most $2$ in $S^1$ (it suffices to take a two-dimensional $T_1\supseteq{\Ker f_{1,d-1}}^\perp$). Moreover, $f$ belongs to more than one of such subrings if and only if $\Ker f_{1,d-1}$ has codimension at most $1$, in which case the initial degree of the apolar ideal of $f$ in each of the subrings $T_\bullet$, whatever dual ring $T^\bullet$ one chooses,  is always the same (and equal to the codimension). This allows us to state the following definition.

\begin{defn}
Let $f\in S_d$. If $f$ belongs to some ring $T_\bullet=\Sy^\bullet T_1$, contained or containing $S_\bullet$ (with the ring structures preserved), with $\dim T_1=2$, then we define the \emph{binary length of $f$} as the initial degree of its apolar ideal, and we denote it by~$\blen f$.
\end{defn}

The following definition is also useful.

\begin{defn}
Given $x\in S^\bullet$ and $f\in S_\bullet$, an \emph{$x$-antiderivative of $f$} is a polynomial $F\in S_\bullet$ such that $\partial_xF=f$, and when $x,f$ are nonzero and homogeneous we sometimes also say that $\Span{F}$ is a \emph{$\Span{x}$-antiderivative of $\Span{f}$}. Moreover, still in the homogeneous case $x\in S^\delta$, $f\in S_d$, if a decomposition
\begin{equation}\label{D}
f=\lambda_1{v_1}^d+\cdots+\lambda_r{v_r}^d\;,\quad\lambda_1,\ldots,\lambda_r\in\K,\;v_1,\ldots,v_r\in S_1\;,
\end{equation}
is given and $x$ vanishes on no one of $v_1,\ldots ,v_r$, then we define \emph{the} $x$-antiderivative of $f$ (\footnote{Or also the $\Span{x}$-antiderivative of $\Span{f}$, if $f\ne 0$.}) \emph{relative to \eqref{D}} as the form
\[
F:=\frac{d!\lambda_1}{(d+\delta)!x\left(v_1\right)}{v_1}^{d+\delta}+\cdots+\frac{d!\lambda_r}{(d+\delta)!x\left(v_r\right)}{v_r}^{d+\delta}\;;
\]
when the powers ${v_1}^d,\ldots ,{v_r}^d$ are linearly independent we also say that the above $x$-antiderivative is \emph{relative to $v_1,\ldots ,v_r$}.
\end{defn}

Note that, in the above notation, the $x$-antiderivative relative to $v_1,\ldots ,v_r$ is the unique $x$-antiderivative of $f$ that lies in $\Span{{v_1}^{d+\delta},\ldots ,{v_r}^{d+\delta}}$.
 
Now we explicitly point out two basic facts that are probably well-known, but for which we are not aware of a direct reference (\footnote{Basic facts like these are heavily scattered in the litterature, and we may easily overlook some reference. For instance, \cite[Remark~3.6]{D2} could have been avoided by using the original reference \cite[Theorem~2]{CS} instead of \cite[9.2.2.1]{L}. We take this occasion for apologizing for that.}).

\begin{rem}\label{Div}
Let $h\in S^d$, $h'\in S^{d'}$ and $e\ge d$. Then $h'$ divides $h$ if and only if $S^e\cap\Ker\partial_{h'}\subseteq S^e\cap\Ker\partial_h$. One implication immediately follows from $\partial_{ph'}=\partial_p\circ\partial_{h'}$, and by the same reason we have $S^e\cap\Ker_{\partial_{h'}}\subseteq\left(h'S^{e-d'}\right)^\perp$. When $h'\ne 0$, since $\partial_{h'}$ maps $S^e$ \emph{onto} $S^{e-d'}$ we have $\dim S^e\cap\Ker\partial_{h'}=\dim S^e-\dim S^{e-d'}$. Since the apolarity pairing is nondegenerate in fixed degree, we also have $\dim\left(h'S^{e-d'}\right)^\perp=\dim S^e-\dim S^{e-d'}$ when $h'\ne 0$. Hence $S^e\cap\Ker\partial_{h'}=\left(h'S^{e-d'}\right)^\perp$ (even when $h'=0$) and, similarly, $S^e\cap\Ker\partial_h=\left(hS^{e-d}\right)^\perp$. Now, to show the converse implication, let us suppose that $S^e\cap\Ker\partial_{h'}\subseteq S^e\cap\Ker\partial_h$, that is, $\left(h'S^{e-d'}\right)^\perp\subseteq\left(hS^{e-d}\right)^\perp$. Again because apolarity is nondegenerate in fixed degree, we deduce that $hS^{e-d}\subseteq h'S^{e-d'}$. Choosing $x\in S^1$ that does not divide $h'$ (we can assume $h'\ne 0$ and $\dim S_1\ge 2$, since the proof is trivial in the opposite case), we have that $h'$ divides $x^{e-d}h$, hence $h'$ divides $h$.
\end{rem}

\begin{rem}\label{Power}
Let $f\in S^d$ with $d\ge 3$ and $\dim S_1=3$. Then $f$ is a $d$th power if and only if for each $x\in S^1$, $\partial_xf$ is a $(d-1)$th power. One implication is immediate. Conversely, suppose that for each $x\in S^1$, $\partial_xf$ is a $(d-1)$th power of some linear form. Let $x,y\in S^1$ be linearly independent. According to \cite[Lemma~4.1]{D2} we have that $\partial_lf=0$ for some nonzero $l\in\Span{x,y}$ (\footnote{In the statement of the cited result, by mistake, one finds $f\in S_d$ instead $f\in S_{d+1}$ and  the condition $d\ge 2$ is omitted; cf.~\cite[Rem.~2.2]{D2}.}). Repeating the argument with another two-dimensional subspace of $S^1$ that does not contain $l$, we get $m\in S^1$ such that $\partial_m=0$ and with $l,m$ linearly independent. Hence $\Ker f_{1,d-1}\ge 2$, and since $\dim S_1=3$ we have that $f$ is a $d$th power of a linear form.

Moreover, let $f\in S_d$, $d\ge 3$, be not a $d$th power. The set of all $\Span{v^{d-1}}$ with $\Span{v}\in\Ps S_1$ is an algebraic (Veronese) variety in $\Ps S_{d-1}$, and $\Ps f_{1,d-1}$ is an algebraic morphism from a nonempty Zariski open subset of $\Ps S^1$ to $\Ps S_{d-1}$. Then $\partial_xf$ is not a $(d-1)$th power for all $\Span{x}$ in a nonempty open Zariski subset of $\Ps S^1$.
\end{rem}

We end this section with two technical lemmas.
 
\begin{lemma}\label{H}
Let $\Span{f}\in\Ps S_d$ and $\Span{x}\in\Ps S^1$, with $\dim S_1=2$. Let $I$ be the apolar ideal of $f$ and set $\ell:=\blen f$, $\ell':=d+2-\ell$,
\begin{equation}\label{Pmor}
W:=S_{d+1}\cap{\partial_{x}}^{-1}\left(\Span{f}\right)\;,\quad H:=S^{\ell'}\cap I\;,\quad K:=S^{\ell'}\cap xI\;.
\end{equation}
Finally, let $X$ be the locus of all $\Span{h}\in\Ps H$ such that $h$ is not squarefree and set $\Span{v_\infty}:=\Span{x}^\perp$, so that \[\Span{{v_\infty}^{d+1}}=S_{d+1}\cap\Ker\partial_x\subset W\;.\]
Then
\begin{itemize}
\item there exists an epimorphism of projective spaces \[\omega:\Ps H\smallsetminus\Ps K\to\Ps W\;,\quad\omega\left(\Span{h}\right)=:\Span{w_h}\;,\] such that $\partial_hw_h=0$ for all $\Span{h}$;
\item for all $\Span{w}\in\Ps W\smallsetminus\Span{{v_\infty}^{d+1}}$ but at most one, we have \[\blen w=\min\left\{\ell+1,\ell'\right\}\;;\]
\item $X\subsetneq\Ps H$;
\item for each projective line $\Ps L\subseteq\Ps H$ that does not meet $\Ps K$, the restriction $\Ps L\to\Ps W$ of $\omega$ is an isomorphism of projective spaces, and if the line $\Ps L$ is not contained in $X$ then there exists a cofinite subset $U\subset\Ps L$ such that for each $\Span{h}\in U$ we have
\begin{itemize}
\item $h$ has distinct roots $\Span{v_1},\ldots,\Span{v_{\ell'}}\in\Ps S_1$;
\item $f\in\Span{{v_1}^d,\ldots ,{v_{\ell'}}^d}$;
\item $x$ vanishes on no one of $v_1,\ldots, v_{\ell'}$ and $\Span{w_h}$ is the $\Span{x}$--antiderivative of $\Span{f}$ relative to $v_1,\ldots ,v_{\ell'}$.
\end{itemize}
\end{itemize}
\end{lemma}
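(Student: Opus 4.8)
The plan is to work in the two-dimensional setting throughout, exploiting the well-understood structure of apolar ideals of binary forms (Sylvester / \cite[Sec.~1.3]{IK}). First I would establish the dimensions: since $\dim S_1=2$, the apolar ideal $I$ of $f$ is a complete intersection generated in degrees $\ell$ and $d+2-\ell=\ell'$, so $\dim H=\dim\left(S^{\ell'}\cap I\right)$ is computed by counting, giving $\dim H=\ell'-\ell+1$ (when $\ell'\ge\ell$; the degenerate cases are handled separately). Likewise $\dim K=\dim\left(xI\cap S^{\ell'}\right)=\dim H-1$, so $\Ps H\smallsetminus\Ps K$ is a nonempty open set and a hyperplane complement. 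The construction of $\omega$ is then the heart of the matter: given $\Span{h}\in\Ps H\smallsetminus\Ps K$, I want to produce $\Span{w_h}\in\Ps W$ with $\partial_h w_h=0$. The point is that $W=S_{d+1}\cap\partial_x^{-1}\left(\Span{f}\right)$ is two-dimensional (it is the preimage of a line under the surjection $\partial_x\colon S_{d+1}\to S_d$, whose kernel $\Span{v_\infty^{d+1}}$ is one-dimensional), and I claim $\omega$ is induced by a linear map $H\to W/\bigl(\text{something}\bigr)$. Concretely, for $h\in H$ the space $S_{d+1}\cap\Ker\partial_h=\bigl(hS^{d+1-\ell'}\bigr)^\perp$ (by \autoref{Div}) has dimension $d+2-(d+2-\ell')=\ell'$... wait, more carefully: it has dimension $\dim S_{d+1}-\dim S^{d+1-\ell'}=(d+2)-(\ell'-1)=d+3-\ell'=\ell+1$. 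Intersecting with the two-dimensional $W$: since $h\in I$ we have $\partial_h f=0$, hence $\partial_h$ maps $W$ into $\Ker\partial_x\cap(\text{image considerations})$; tracking this shows $\dim(W\cap\Ker\partial_h)\ge 1$, and it is exactly $1$ precisely when $h\notin xI$, which is where the hypothesis $\Span{h}\notin\Ps K$ enters. That one-dimensional intersection is $\Span{w_h}$, and linearity/surjectivity of $\omega$ follows from a dimension count on the incidence correspondence $\{(\Span{h},\Span{w})\in\Ps H\times\Ps W:\partial_h w=0,\ \Span{h}\notin\Ps K\}$.

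Next, for the binary length statement: for $\Span{w}\in\Ps W\smallsetminus\Span{v_\infty^{d+1}}$, I compute $\blen w=$ initial degree of the apolar ideal of $w$. Since $\partial_x w=\lambda f$ for some $\lambda\ne 0$ (as $w\notin\Ker\partial_x$), any $y$ with $\partial_y w=0$ satisfies $\partial_{xy} f... $ — rather, apolarity of $w$ relates to apolarity of $f$ via multiplication by $x$: one shows the apolar ideal $J$ of $w$ satisfies $xJ\subseteq$ (apolar ideal of $f$) and $I\cdot S^1\supseteq$ relevant pieces, so the initial degree of $J$ is either $\ell+1$ (when the degree-$\ell$ generator of $I$ ``does not lift'', i.e. is not of the form $\partial_x(\cdot)$ applied appropriately) or $\ell'$ (when it does); the ``at most one exceptional $\Span{w}$'' accounts for the single value of $w$ where a degree-$\ell'$ element of $J$ might drop to a divisor. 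Here I would invoke the complete-intersection structure of binary apolar ideals to make ``initial degree'' computations exact.

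For $X\subsetneq\Ps H$: I must exhibit one $\Span{h}\in\Ps H$ with $h$ squarefree. If every $h\in H$ had a repeated root, then (since $H$ is a linear system of binary forms of degree $\ell'$ on $\Ps S_1\cong\Ps^1$) the base locus or a common structure would force a contradiction — concretely, a pencil (or higher-dimensional system) of binary forms all non-squarefree must have a common repeated factor or be a very special system, and one checks this is incompatible with $H=S^{\ell'}\cap I$ being base-point-free in the relevant sense (the generators of $I$ have no common factor since $I$ is a complete intersection defining the empty scheme in positive degree). Finally, for the last bullet: given a line $\Ps L\subseteq\Ps H$ missing $\Ps K$, the restriction of $\omega$ is injective (kernel would be a point of $\Ps K$) hence an isomorphism onto a line in $\Ps W$; and for $\Span{h}\in\Ps L$ outside $X$ and outside the finitely many bad loci, $h=c\prod(v_i\text{-linear forms})$ with distinct $\Span{v_i}$, the condition $\partial_h f=0$ means $f$ is apolar to $h$, which by apolarity duality for binary forms (Sylvester) is exactly the statement $f\in\Span{v_1^d,\dots,v_{\ell'}^d}$; then $w_h\in S_{d+1}\cap\Ker\partial_h$ forces $w_h\in\Span{v_1^{d+1},\dots,v_{\ell'}^{d+1}}$, and since $\partial_x w_h=\lambda f$ this is the unique $\Span{x}$-antiderivative relative to $v_1,\dots,v_{\ell'}$ (using the uniqueness remark right after the antiderivative definition), provided $x(v_i)\ne 0$ for all $i$, which excludes $v_\infty$ as a root of $h$ — a further cofinite condition on $\Ps L$. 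Assembling the finitely many cofinite conditions gives the desired $U\subset\Ps L$.

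**Main obstacle.** The delicate point is the construction of $\omega$ as a genuine \emph{morphism of projective spaces} (induced by a linear map) together with the exact determination of when the fibre dimension jumps — i.e.\ pinning down that the ``bad locus'' is precisely $\Ps K$ and not larger. This requires a careful simultaneous bookkeeping of the maps $\partial_x\colon S_{d+1}\to S_d$, $\partial_h\colon S_{d+1}\to S^{d+1-\ell'}$ and the containments $I\subseteq S^\bullet$, $xI\subseteq I$, and I expect most of the real work (and the only genuinely non-routine calculation) to be there; the binary-length count and the squarefree-locus argument are comparatively standard once the complete-intersection description of binary apolar ideals is in hand.
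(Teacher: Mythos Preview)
Your plan is essentially the paper's own approach: the bilinear pairing $\beta\colon H\times W\to\Span{v_\infty^{\ell-1}}$, $(h,w)\mapsto\partial_h w$, the identification of $K$ as its left kernel, the complete-intersection description $I=(l,h^0)$ with $\deg l=\ell$, $\deg h^0=\ell'$, the analysis of $\blen w$ via the inclusion of apolar ideals, the coprimality of $l,h^0$ for $X\subsetneq\Ps H$, and the cofinite-conditions argument on $\Ps L$ all match the paper line by line.

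The one point where your proposal is not yet right is precisely the step you flag as the main obstacle. A ``dimension count on the incidence correspondence'' will \emph{not} give you that $\omega$ is a morphism of projective spaces: the assignment $h\mapsto W\cap\Ker\partial_h$ is, on its face, a kernel construction and not linear in $h$. The mechanism that makes it linear is this: since $\partial_h|_W$ lands in the one-dimensional space $\Span{v_\infty^{\ell-1}}$, the map $h\mapsto\partial_h|_W$ is a genuine linear map $\overline\beta\colon H\to\Hom(W,\Span{v_\infty^{\ell-1}})\cong W^*$; then, because $\dim W=2$, any nondegenerate alternating form on $W$ gives a linear isomorphism $\iota\colon W\to W^*$ with $\iota(w)(w)=0$, and $\varphi:=\iota^{-1}\circ\overline\beta\colon H\to W$ is the linear map inducing $\omega$, automatically satisfying $\partial_h\varphi(h)=0$. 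This is the paper's trick, and it is the specific missing ingredient in your outline; once you have it, everything else you wrote goes through as stated.
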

\begin{proof}
For each $h\in H$ and $w\in W$ we have $\partial_{xh}w=0$, because $\partial_xw\in\Span{f}$ and $h\in I$; hence $\partial_hw\in S_{\ell-1}\cap\Ker\partial_x=\Span{{v_\infty}^{\ell-1}}$. Thus we have a bilinear map
\[
\beta:H\times W\to\Span{{v_\infty}^{\ell-1}}\;,\qquad\beta(h,w):=\partial_hw\;.
\]
If $h\in K$, then $h=xh'$ for some $h'\in I$; hence for all $w\in W$ we have $\partial_hw=\partial_{h'}\partial_xw=0$, because $\partial_xw\in\Span{f}$. This shows that $K$ is contained in the left kernel of $\beta$. Conversely, if $h$ is in the left kernel, then $\partial_h$ vanishes on $W$, and in particular on $\Span{{v_\infty}^{d+1}}\subset W$. Hence $h=xh'$ for some $h'\in S^d$, by \autoref{Div} (\footnote{Alternatively, one may observe that $0=\partial_h\left({v_\infty}^{d+1}\right)=\frac{(d+1)!}{\ell'!}h\left(v_\infty\right){v_\infty}^{\ell-1}$. Hence $h$ vanishes on the root $v_\infty$ of $x$, that is, $h$ is divisible by $x$.}). Choosing an $x$-antiderivative $w$ of $f$, we have $0=\partial_hw=\partial_{h'}f$, and thus $h'\in I$. We conclude that $K$ is the left kernel of $\beta$.

Let \[\overline{\beta}:H\to\Hom\left(W,\Span{{v_\infty}^{\ell-1}}\right)\;,\quad\overline{\beta}(h)(w):=\beta(h,w)=\partial_hw\;,\] be the homomorphism induced by $\beta$, and let $\iota:W\to \Hom\left(W,\Span{{v_\infty}^{\ell-1}}\right)$ be an isomorphism such that $\iota(w)(w)=0$ for all $w\in W$ (in other words, $\iota$ is the homomorphism induced by a nondegenerate bilinear alternating map on $W$ with values in $\Span{{v_\infty}^{\ell-1}}$, which certainly exists because $\dim W=2$). Then $\varphi:=\iota^{-1}\circ\overline{\beta}:H\to W$ is a linear map with kernel $K$ such that $\partial_h\left(\varphi(h)\right)=0$ for all $h\in H$. This shows that $\omega:=\Ps\varphi$ is a morphism of projective spaces such that $\partial_hw_h=0$ (under the notation $\Span{w_h}:=\omega\left(\Span{h}\right)=\Span{\varphi(h)}$). We have to check that $\omega$ is surjective.

According to \cite[Theorem~1.44(iv)]{IK}, $I$ is generated by two homogeneous forms $l\in S^\ell, h^0\in S^{\ell'}$ (hence $h^0\in H$). Recall also that $\ell\le\ell'$ because $\ell=\blen f$. Therefore
\begin{equation}\label{Descr}
H=lS^{\ell'-\ell}+\Span{h^0}\;,\qquad K=lxS^{\ell'-\ell-1}\;.
\end{equation}
Since $S^{d+1}\subset I$, we have that $l,h^0$ are coprime, and therefore $h^0\not\in lS^{\ell'-\ell}$. Since $\dim\varphi\left(lS^{\ell'-\ell}\right)=1$ we have that $\varphi$ is surjective, and hence $\omega$ is surjective as it was to be shown.

Let $\varphi\left(lS^{\ell'-\ell}\right)=:\Span{w_l}\in\Ps W$ (possibly $\Span{w_l}=\Span{{v_\infty}^{d+1}}$). Since $\partial_{lp}w_l=0$ for all $p\in S^{\ell'-\ell}$, we have $\partial_lw_l=0$. Note also that $w_l=w_{pl}$ for all $p\in S^{\ell'-\ell}\smallsetminus xS^{\ell'-\ell-1}$. Moreover,
\begin{equation}\label{Res}
\Span{h},\Span{h'}\in\Ps H\smallsetminus\Ps  K\;,\Span{w_{h'}}\ne\Span{w_h}\;\Longrightarrow\;\partial_hw_{h'}\ne 0\;,
\end{equation}
because $\partial_hw_{h'}=0$ would imply that $\partial_h$ vanishes on $\Span{w_h,w_{h'}}=W$ ($\dim W=2$), and this is excluded since $h\not\in K$. Since $\omega$ is suriective, we conclude that $\partial_lw\ne 0$ for each $\Span{w}\in\Ps W\smallsetminus\left\{\Span{w_l}\right\}$. On the other hand, if $\Span{w}\in\Ps W\smallsetminus\left\{\Span{{v_\infty}^{d+1}}\right\}$, then $\Span{\partial_xw}=\Span{f}$, and hence the apolar ideal of $w$ is contained in $I$ and contains~$xI$. Thus \[\ell\le\blen w\le\ell+1\;,\qquad \forall\Span{w}\in\Ps W\smallsetminus\left\{\Span{{v_\infty}^{d+1}}\right\}.\]

Now, if $\ell'\ge\ell+1$, then for each $\Span{w}\in\Ps W\smallsetminus\left\{\Span{{v_\infty}^{d+1}},\Span{w_l}\right\}$ we have $\blen w=\ell+1=\min\left\{\ell+1,\ell'\right\}$. To deal with the case $\ell'=\ell$, notice that for each $w\in W$ we have $\partial_{h}w=0$ for some $\Span{h}\in H$, because $\varphi$ is surjective; hence $\blen w\le\ell'$. Thus, if $\ell=\ell'$ then for each $\Span{w}\in\Ps W\smallsetminus\Span{{v_\infty}^{d+1}}$ we have $\blen w=\ell'=\min\left\{\ell+1,\ell'\right\}$ (\footnote{The equality $\blen w=\min\left\{\ell+1,\ell'\right\}$ we have just proved for all $\Span{w}\in\Ps W\smallsetminus\left\{\Span{{v_\infty}^{d+1}},\Span{w_l}\right\}$ says, in other terms, that $\blen w=\ell+1$ unless $d$ is even, $d=2s$, and $\ell$ is the maximum allowed for that degree, that is, $s+1$.}).

Since $l,h^0$ are coprime, taking into account \eqref{Descr} and Bertini's theorem (see also \cite[Lemma~1.1, Remark~1.1.1]{K}), we have that $X$ is a proper subset of $\Ps H$.

Finally, let $\Ps L\subseteq\Ps H\smallsetminus\Ps K$ be a projective line. The restriction $\Ps L\to\Ps W$ of $\omega$ is an isomorphism simply because $\Ps W$ is a projective line as well, and $\Ps L\cap\Ps K=\emptyset$. Since the proper subset $X\subsetneq\Ps H$ is algebraic, with equation given by the discriminant of degree $\ell'$ forms (inside $\Ps H$), we have that if $\Ps L\not\subseteq X$, then $U:=\Ps L\smallsetminus\left(X\cup\omega^{-1}\left(\Span{{v_\infty}^{d+1}}\right)\right)$ is a cofinite subset of $\Ps L$. Since each $\Span{h}\in U$ is outside $X$, $h$ is squarefree, that is, it has distinct roots $\Span{v_1},\ldots,\Span{v_{\ell'}}\in\Ps S_1$. For such $h,v_1,\ldots ,v_{\ell'}$, according to \cite[Lemma~1.31]{IK}, we have $f\in\Span{{v_1}^d,\ldots ,{v_{\ell'}}^d}$ as required. By the same reason, we have $w_h\in\Span{{v_1}^{d+1},\ldots ,{v_{\ell'}}^{d+1}}$, and since $\Span{w_h}\ne\Span{{v_\infty}^{d+1}}$, $\Span{w_h}$ is a $\Span{x}$--antiderivative of $\Span{f}$. Moreover, $x$ vanishes on no one of $v_1,\ldots, v_{\ell'}$ by \eqref{Res}, and ${v_1}^d,\ldots ,{v_{\ell'}}^d$ are linearly independent because $\ell'\le d+1$. The above said suffices to prove that $\Span{w_h}$ is the $\Span{x}$--antiderivative of $\Span{f}$ relative to $v_1,\ldots ,v_{\ell'}$.
\end{proof}

\begin{lemma}\label{T}
Let $\Span{g'}\in\Ps S_d$ with $\dim S_1=3$, $0<d=2s+\varepsilon$, $\varepsilon\in \{0,1\}$ and $s$ integer. Let $\Span{l^0},\ldots,\Span{l^t}\in\Ps S^1$ be distinct and such that $\partial_{l^0}g'=0$, and for each $i\in\{1,\ldots, t\}$ let $g_i$ be an $l^i$-antiderivative of $g'$. If 
\[
\blen g'=\blen\partial_{l^0}g_1=\cdots=\blen\partial_{l^0}g_t=s+1
\]
{\rm(\footnote{For each $i$, $\partial_{l^0}g_i$ is annihilated by $l^i$.})}, then there exists a power sum decomposition
\begin{equation}\label{Decv}
g'={v_1}^d+\cdots+{v_r}^d
\end{equation}
such that: $r\le s+1+\varepsilon$ and, for each $i\in\{1,\ldots, t\}$,
\begin{itemize}
\item $l^i$ vanishes on no one of $v_1,\ldots, v_r$,
\item denoting by $F_i$ the $l^i$-antiderivative relative to \eqref{Decv}, $\blen\left(g_i-F_i\right)=s+1+\varepsilon$.
\end{itemize}
\end{lemma}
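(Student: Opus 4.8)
The plan is to reduce everything to binary algebra and to invoke \autoref{H} twice — once for $g'$ and once for each $\partial_{l^0}g_i$; I treat $d\ge 2$, the case $d=1$ being immediate. Since $\partial_{l^0}g'=0$ and $\blen g'=s+1\ge 2$, the kernel of $g'_{1,d-1}$ is exactly $\Span{l^0}$, so $g'$ lies in $T_\bullet:=\Sy^\bullet P$ with $P:=\Span{l^0}^\perp\subseteq S_1$, and $\blen g'=s+1$ is its binary length in the (unique) binary ring $T_\bullet$. Fix the dual ring $T^\bullet:=\Sy^\bullet\!\left(S^1/\Span{l^0}\right)$, write $\bar l^i$ for the image of $l^i$ (nonzero, as $l^i\ne l^0$), and set $\ell:=s+1$, $\ell':=d+2-\ell=s+1+\varepsilon$; by \cite[Theorem~1.44(iv)]{IK} the apolar ideal $I$ of $g'$ in $T^\bullet$ is $(l,h^0)$ with $\deg l=\ell$, $\deg h^0=\ell'$, $l$ and $h^0$ coprime (as $T^{d+1}\subseteq I$), so $H:=T^{\ell'}\cap I$ has dimension $\varepsilon+2$. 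The key elementary point: every power sum decomposition of $g'$ produced by \autoref{H} inside $T^\bullet,T_\bullet$ has its summands $v_j$ in $P$, so $l^0(v_j)=0$; hence $\partial_{l^0}F_i=0$ for the $l^i$-antiderivative $F_i$ relative to \eqref{Decv}, so that $\partial_{l^0}(g_i-F_i)=\partial_{l^0}g_i$ \emph{independently of the decomposition}. In particular $g_i-F_i\ne 0$, and $g_i\notin T_{d+1}$ because $\partial_{l^0}$ kills $\Sy^{d+1}P$.

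Now fix $i$, let $U_i:=\Span{l^i}^\perp\subseteq S_1$ and let $u_i$ span the line $P\cap U_i$. Both $\partial_{l^0}g_i$ and $g_i-F_i$ lie in $\Sy^\bullet U_i$, and $\partial_{l^0}(g_i-F_i)=\partial_{l^0}g_i$; so $g_i-F_i$ lies in the two-dimensional space $W_i$ of \autoref{H} applied inside $\Sy^\bullet U_i$ to $f:=\partial_{l^0}g_i$ and (the image of) $x:=l^0$, away from $\Span{u_i^{d+1}}$, which here plays the role of $\Span{v_\infty^{d+1}}$ (the relevant $v_\infty$ being $u_i$). As $\blen\partial_{l^0}g_i=s+1$, this instance has parameters $\ell=s+1$ and $\ell'=s+1+\varepsilon=\min\{\ell+1,\ell'\}$, so \autoref{H} gives $\blen(g_i-F_i)=s+1+\varepsilon$ \emph{unless} $\Span{g_i-F_i}$ is one fixed point of $\Ps W_i$. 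On the other hand, applying \autoref{H} inside $T^\bullet,T_\bullet$ to $f:=g'$ and $x:=\bar l^i$ produces the \emph{same} $H$, an epimorphism $\omega_i\colon\Ps H\smallsetminus\Ps K_i\to\Ps W_i'$ with $W_i':=T_{d+1}\cap\partial_{\bar l^i}^{-1}\!\left(\Span{g'}\right)$ and $\Ps K_i$ empty (if $\varepsilon=0$) or a single point (if $\varepsilon=1$), and — for $\Span{h}$ in a cofinite subset of any line $\Ps L\subseteq\Ps H$ that misses $\Ps K_i$ and is not inside the non-squarefree locus $X$ — a power sum decomposition $g'=v_1^d+\cdots+v_r^d$ (with $r\le\ell'=s+1+\varepsilon$ and $v_j\in P$) on whose summands $l^i$ never vanishes and for which $\omega_i(\Span{h})=\Span{F_i}$, precisely the $F_i$ of \eqref{Decv} (the $\Span{\bar l^i}$-antiderivative relative to the roots, computed in $T$, agrees with the $l^i$-antiderivative computed in $S$, since all $v_j\in P$).

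It remains to combine these and to fix $\Ps L$. The fibre of $\partial_{\bar l^i}\colon T_{d+1}\to T_d$ over $g'$ is a coset $F_i^\ast+\Span{u_i^{d+1}}$, so on $\Ps L$ the isomorphism $\omega_i|_{\Ps L}$ makes $\Span{F_i}$ sweep a cofinite subset of $\Ps W_i'$; equivalently, the coordinate $c$ in $F_i=F_i^\ast+c\,u_i^{d+1}$ is a nonconstant function of $\Span{h}$. Since $g_i-F_i=(g_i-F_i^\ast)-c\,u_i^{d+1}$ with $g_i-F_i^\ast\notin\Span{u_i^{d+1}}$ (because $g_i\notin T_{d+1}$ while $F_i^\ast\in T_{d+1}$), the point $\Span{g_i-F_i}$ in turn sweeps a cofinite subset of $\Ps W_i\smallsetminus\Span{u_i^{d+1}}$, so it avoids the single bad point of the first application for all but finitely many $\Span{h}\in\Ps L$. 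Now fix one line $\Ps L\subseteq\Ps H$ that misses every $\Ps K_i$ ($i=1,\dots,t$) and is not contained in $X$: such a line exists since the $\Ps K_i$ are at most points and $X\subsetneq\Ps H$ (when $\varepsilon=0$ one simply takes $\Ps L=\Ps H$). Deleting from $\Ps L$ the finitely many exceptional points for each of the finitely many values of $i$ still leaves a nonempty set, and any $\Span{h}$ in it gives a decomposition with all the required properties. The step I expect to be the main obstacle is precisely the second application of \autoref{H}: recognizing that $F_i$, read inside $T_{d+1}$, is exactly $\omega_i(\Span{h})$, so that $\Span{g_i-F_i}$ genuinely moves and the ``for all but one'' conclusion of the first application becomes usable; the remainder is bookkeeping — one line $\Ps L$ for all $i$, and finitely many exceptions.
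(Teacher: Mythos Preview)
Your proof is correct and follows essentially the same route as the paper's: both apply \autoref{H} once inside $\Ker\partial_{l^0}$ to $g'$ (producing the decomposition with $v_j\in P$ and the epimorphisms $\omega_i$ on a common $H$) and once inside each $\Ker\partial_{l^i}$ to $\partial_{l^0}g_i$ (controlling $\blen(g_i-F_i)$), then intersect the resulting cofinite conditions on a single line $\Ps L\subseteq\Ps H$ chosen to miss every $\Ps K_i$ and the non-squarefree locus. The only cosmetic difference is that the paper packages your coordinate computation $F_i=F_i^\ast+c\,u_i^{d+1}\mapsto g_i-F_i$ as an explicit projective isomorphism $\tau_i:\Ps W_{0,i}\to\Ps W_{i,0}$, $\Span{w}\mapsto\Span{\lambda_i(w)g_i-w}$, but the content is identical.
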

\begin{proof}
For each $i\in\{0,\ldots, t\}$, let $R_i^\bullet:=S^\bullet/\left(l^i\right)$, $R_{i,\bullet}:=\Ker\partial_{l^i}\subset S_\bullet$, with the apolarity pairing induced by the one between $S^\bullet$ and $S_\bullet$. Let $I\subset R_0^\bullet$ be the apolar ideal of $g'\in R_{0,d}$, set
\[
H:=R_0^{s+1+\varepsilon}\cap I\;,
\]
for each $i\in\{1,\ldots, t\}$ set
\[
W_{0,i}:=R_{0,d+1}\cap{\partial_{l^i}}^{-1}\left(\Span{g'}\right)
\]
and when $\varepsilon=1$, also $\Span{k_i}:=R_0^{s+2}\cap l^iI$. For each $i\in\{1,\ldots, t\}$, let us exploit \autoref{H} with $R_0^\bullet$, $R_{0,\bullet}$, $g'$, $l^i+\left(l^0\right)$ in place of $S^\bullet$, $S_\bullet$, $f$, $x$. We get projective epimorphisms \[\omega_i:\Ps H\to\Ps W_{0,i}\;.\]
Moreover, we can fix a projective line $\Ps L\subseteq\Ps H$ ($\Ps L=\Ps H$ when $\varepsilon=0$) not contained in the singular locus $X$ (which does not depend on $i$) and passing through no one of $\Span{k_1},\ldots,\Span{k_t}$ (when $\varepsilon=1$). Hence, the restriction $\varrho_i:\Ps L\to\Ps W_{0,i}$ of $\omega_i$ is a projective isomorphism for each $i$, and we also have cofinite subsets $U_{0,i}\subset\Ps L$ that fulfill the properties listed by the end of the statement of \autoref{H}.

Now, for each $i$ we have $\partial_{l^0}g_i\ne 0$ because $\blen\partial_{l^0}g_i=s+1>0$. Hence the vector space
\[
W_{i,0}:=R_{i,d+1}\cap{\partial_{l^0}}^{-1}\left(\Span{\partial_{l^0}g_i}\right)
\]
is two-dimensional. Since $W_{0,i}=R_{0,d+1}\cap{\partial_{l^i}}^{-1}\left(\Span{g'}\right)$, for all $w\in W_{0,i}$ we have 
\[
\partial_{l^i}w=\lambda_i(w)g'
\]
for some scalar $\lambda_i(w)$, and therefore $\lambda_i(w)g_i-w\in W_{i,0}$. This defines a map $W_{0,i}\to W_{i,0}$ and to check that it is a vector space isomorphism is easy (take into account that $\partial_{l^0}\left(\lambda_i(w)g_i-w\right)=\lambda_i(w)\partial_{l^0}g_i$). Therefore we have isomorphisms of projective spaces
\[
\tau_i:\Ps W_{0,i}\to\Ps W_{i,0}\;,\qquad \Span{w}\mapsto\Span{\lambda_i(w)g_i-w}\;.
\]
According to \autoref{H}, we have cofinite subsets $U'_{i,0}\subset\Ps W_{i,0}$ such that
\begin{equation}\label{Blup}
\blen w=s+1+\epsilon\;,\quad \forall\Span{w}\in U'_{i,0}\;
\end{equation}
(more precisely, $\sharp\left(\Ps W_{i,0}\smallsetminus U'_{i,0}\right)\le 2$).

Let $U_{i,0}:={\varrho_i}^{-1}\left({\tau_i}^{-1}\left(U'_{i,0}\right)\right)$ for each $i$, which is obviuosly a cofinite subset of~$\Ps L$. Now, let us pick $\Span{h}$ in the nonempty intersection
\[
U_{0,1}\cap\cdots\cap U_{0,t}\cap U_{1,0}\cap\cdots\cap U_{t,0}\;,
\]
and let $\Span{v_1},\ldots,\Span{v_{s+1+\epsilon}}$ be its roots, which are distinct because $\Span{h}\in U_{0,i}$ (whatever $i$ one chooses). For each $i$, $l^i$ vanishes on no one of $v_1,\ldots, v_{s+1+\epsilon}$, because $\Span{h}\in U_{0,i}$. Since $g'\in\Span{{v_1}^d,\ldots,{v_{s+1+\varepsilon}}^d}$, $d>0$, for an appropriate choice of the representatives $v_1,\ldots, v_{s+1+\epsilon}$ one gets \eqref{Decv}. Since $F_i$ is the $l^i$-antiderivative of $g'$ relative to \eqref{Decv}, that is, relative to $v_1,\ldots, v_{s+1+\epsilon}$, we have $\Span{F_i}=\omega_i\left(\Span{h}\right)$. Since $F_i$ is an $l^i$-antiderivative of $g'$, we have $\lambda_i\left(F_i\right)=1$, and hence \[\tau_i\left(\omega_i\left(\Span{h}\right)\right)=\Span{g_i-F_i}\;.\] Since $\Span{h}\in U_{i,0}$ for each $i$, we have $\Span{g_i-F_i}\in U'_{i,0}$, and therefore $\blen\left(g_i-F_i\right)=s+\varepsilon+1$ by \eqref{Blup}.
\end{proof}

\section{The upper bound}

\begin{prop}\label{Lines}
Let $f\in S_d$, with $\dim S_1=3$ and $d\ge 2$, let $f_1,\ldots, f_a\in S_\bullet$ be homogeneous forms with degrees at least $d+1$, and let $\Span{x^1},\ldots,\Span{x^b}\in\Ps S^1$. If $f, f_1,\ldots, f_a$ are not powers of linear forms, then there exist distinct
\[
\Span{l^1},\ldots,\Span{l^d}\in\Ps S^1\smallsetminus\left\{\Span{x^1},\ldots,\Span{x^b}\right\}
\]
such that
\[
\partial_{l^1\cdots l^d}f=0\;,\quad\partial_{l^1\cdots\widehat{l^i}\cdots l^d}f\ne 0\;\forall i\in\{1,\ldots ,d\},\quad\partial_{l^1\cdots l^d}f_j\ne 0\;\forall j\in\{1,\ldots ,a\}\,,
\]
where the hat denotes omission.
\end{prop}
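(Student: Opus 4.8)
First I would recast the conclusion in terms of the single degree-$d$ form $h:=l^1\cdots l^d\in S^d$. Once the $\Span{l^i}$ are pairwise distinct, $h$ is squarefree, so its only divisors of degree $d-1$ are the products $h/l^i=l^1\cdots\widehat{l^i}\cdots l^d$, and the three asserted relations say precisely that $\partial_hf=0$, that no divisor of $h$ of degree $d-1$ annihilates $f$, and that $h$ lies in the apolar ideal of none of the $f_j$. The plan is to build $h$ in two stages: first choose $\Span{l^1},\ldots,\Span{l^{d-1}}\in\Ps S^1$ in sufficiently general position, then choose $\Span{l^d}$ by solving a single linear condition along a line of $\Ps S^1$.

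For the first stage, set $g:=l^1\cdots l^{d-1}$ and $g_i:=g/l^i$ ($i\le d-1$), and ask of $(\Span{l^1},\ldots,\Span{l^{d-1}})\in(\Ps S^1)^{d-1}$ that: (i) the $\Span{l^i}$ be pairwise distinct and distinct from $\Span{x^1},\ldots,\Span{x^b}$; (ii) $v_0:=\partial_gf\ne 0$; (iii) for each $i$, the conic $\partial_{g_i}f\in S_2$ be not the square of a linear form; (iv) for each $j$, $\partial_gf_j\in S_{\deg f_j-d+1}$ be nonzero and not a power of a linear form. The locus where each of these holds is a nonempty open subset of the irreducible variety $(\Ps S^1)^{d-1}$, hence so is their intersection. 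Here (i) is clear. For (ii) and the nonvanishing in (iv): the powers $m^{d-1}$, $\Span{m}\in\Ps S^1$, span $S^{d-1}$, so the products of $d-1$ elements of $S^1$ are not all contained in the proper subspaces $\Ker f_{d-1,1}$ and $\Ker (f_j)_{d-1,\deg f_j-d+1}$ of $S^{d-1}$; hence $\partial_gf\ne 0$, and likewise $\partial_gf_j\ne 0$, on nonempty open sets. For (iii) when $d\ge 3$, and for the ``not a power'' clause of (iv), I would iterate the second assertion of \autoref{Power}: $f$ (resp.\ $f_j$) is not a power by hypothesis, so $\partial_{l^1}f$ is not a power for $\Span{l^1}$ generic, then $\partial_{l^1l^2}f$ is not a power for $\Span{l^2}$ generic, and so on; \autoref{Power} gets applied only to forms of degree $\ge 3$ (the degrees run from $d$ down to $3$ in the case of $f$, and from $\deg f_j$ down to $\deg f_j-d+2\ge 3$ in the case of $f_j$), so generically $\partial_{l^1\cdots l^{d-2}}f$ is not a square and $\partial_{l^1\cdots l^{d-1}}f_j$ is not a power; imposing the former for each of the $d-1$ sub-$(d-2)$-tuples of $(\Span{l^1},\ldots,\Span{l^{d-1}})$ gives (iii). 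When $d=2$ the form $g_i$ is a constant and $\partial_{g_i}f$ is a nonzero multiple of $f$, which is not a square by hypothesis, so (iii) is automatic.

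Now fix such $\Span{l^1},\ldots,\Span{l^{d-1}}$. Since $v_0\ne 0$ and $\dim S_1=3$, the space $\Lambda:=\{x\in S^1:\partial_xv_0=0\}$ is $2$-dimensional, $\Ps\Lambda$ is a line in $\Ps S^1$, and $\partial_{l^1\cdots l^{d-1}l^d}f=\partial_{l^d}v_0=0$ for every $l^d\in\Lambda$. I would then choose $\Span{l^d}\in\Ps\Lambda$ outside the finitely many points $\Span{l^i}$ and $\Span{x^k}$, and outside, for each $i\le d-1$ and each $j$, the locus $\{\Span{x}:\partial_x(\partial_{g_i}f)=0\}$ and the locus $\{\Span{x}:\partial_x(\partial_gf_j)=0\}$. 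The key observation is that for a nonzero form $q$ of degree $\ge 1$ that is not a power of a linear form, $\{x\in S^1:\partial_xq=0\}$ has dimension at most $1$ and so meets the line $\Ps\Lambda$ in at most one point; here $\partial_{g_i}f$ is a nonzero non-square conic — nonzero because applying $\partial_{l^i}$ returns $v_0$ — and $\partial_gf_j$ is a nonzero non-power by (iv), so each such locus removes at most one point of $\Ps\Lambda$. As $\Ps\Lambda$ is a projective line over the infinite field $\K$, an admissible $\Span{l^d}$ survives, and then $\partial_{l^1\cdots l^d}f=0$, $\partial_{l^1\cdots\widehat{l^i}\cdots l^d}f=\partial_{l^d}(\partial_{g_i}f)\ne 0$ for $i<d$ (while $\partial_{l^1\cdots l^{d-1}}f=v_0\ne 0$ for $i=d$), $\partial_{l^1\cdots l^d}f_j=\partial_{l^d}(\partial_gf_j)\ne 0$, and the $\Span{l^1},\ldots,\Span{l^d}$ are distinct and avoid the $\Span{x^k}$ — as required.

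The step I expect to demand the most care is this last one, together with the genericity conditions (iii)–(iv) that feed it. Without those conditions the bad locus for $\Span{l^d}$ on $\Ps\Lambda$ could be the whole line: e.g.\ if $\partial_{g_i}f$ were equal to $v_0^2$, then $\{\Span{x}:\partial_x(\partial_{g_i}f)=0\}$ would equal $\Ps\Lambda$ and no $\Span{l^d}$ on that line could keep $\partial_{l^1\cdots\widehat{l^i}\cdots l^d}f$ nonzero. So the whole argument hinges on first forcing $\partial_{g_i}f$ and $\partial_gf_j$ to be nondegenerate in this precise sense, which is exactly where \autoref{Power} — hence the hypothesis that $f$ and the $f_j$ are not powers of linear forms — is needed; a smaller but genuine point is the degree bookkeeping in that iteration, ensuring the boundary cases $d=2$ and $\deg f_j=d+1$ are handled.
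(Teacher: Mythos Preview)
Your argument is correct and follows essentially the same route as the paper: iterate \autoref{Power} to keep the successive derivatives from becoming powers, then constrain the one remaining linear form to lie in the $2$-dimensional annihilator of the resulting degree-$1$ form and avoid finitely many bad points on that line. The only difference is organizational: the paper packages the choice of the first $d-1$ forms as an induction on $d$ (picking $l^d$ first and slipping $f$ itself among the auxiliary forms $f_j$ in the recursive call, which is how the case $i=d$ of $\partial_{l^1\cdots\widehat{l^i}\cdots l^d}f\ne 0$ gets handled), whereas you make the generic choice of $l^1,\ldots,l^{d-1}$ in one shot and treat that case directly via your condition~(ii).
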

\begin{proof}
According to \autoref{Power}, we can fix
\[
\Span{l^d}\in\Ps S^1\smallsetminus\left\{\Span{x^1},\ldots,\Span{x^b}\right\}
\]
such that $f'_1:=\partial_{l^d}f_1,\ldots ,f'_a:=\partial_{l^d}f_a$ are not powers of linear forms. Since $f\ne 0$ because it is not a $d$th power, we can also assume that, in addition, $f':=\partial_{l^d}f\ne 0$.

Let us first suppose that $d=2$. For each $i\in\{1,\ldots ,a\}$, since $f'_i$ is not a power of a linear form, we may have $\partial_lf'_i=0$ for at most one $\Span{l}\in\Ps S^1$. Therefore there exists a finite subset $\Sigma\subset\Ps S^1$ such that for all $\Span{l}\in\Ps S^1\smallsetminus\Sigma$ and $i\in\{1,\ldots ,a\}$ we have $\partial_lf'_i\ne 0$. We have $\partial_lf=0$ for at most one $\Span{l}\in\Ps S^1$ as well. Since $\Ker f'_{1,0}=\Span{f'}^\perp$ is an infinite set, we can pick out $\Span{l^1}\in\Ps\Span{f'}^\perp$ such that
\begin{itemize}
\item $\partial_{l^1}f\ne 0$;
\item $\Span{l^1}\not\in\left\{\Span{l^2},\Span{x^1},\ldots,\Span{x^b}\right\}\cup\Sigma$.
\end{itemize}
It is immediate to check that $\Span{l^1},\Span{l^2}$ fulfill all the requirements in the statement.

Now, let us assume $d\ge 3$. In this case we can assume that $f'$ is not a $(d-1)$th power and, by induction on $d$, that the statement holds with $f'$ in place of $f$, with $f,f'_1,\ldots,f'_a$ in place of $f_1,\ldots,f_a$ and with $l^d,x^1,\ldots, x^b$ in place of $x^1,\ldots, x^b$. This gives linear forms $l^1,\ldots ,l^{d-1}$ that, together with $l^d$, fullfill all the requirements.
\end{proof}

\begin{prop}\label{Main}
Let $f\in S_d$ with $\dim S_1=3$, $e\in\{0,\ldots,d\}$, $e=2s+\varepsilon$, with $\varepsilon\in \{0,1\}$ and $s$ integer, and let \[\Span{l^1},\ldots,\Span{l^d}\in\Ps S^1\] be distinct and such that \[\partial_{l^1\cdots l^d}f=0\;;\qquad\partial_{l^1\cdots \widehat{l^i}\cdots l^d}f\ne 0\;\forall i\;.\]
Then there exists a power sum decomposition
\begin{equation}\label{Dec}
\partial_{l^{e+1}\cdots l^d}f={v_1}^{e}+\cdots+{v_r}^{e}
\end{equation}
such that:
\begin{itemize}
\item $r\le s^2+3s+\varepsilon(s+2)$;
\item for each $i\in\{e+1,\ldots ,d\}$, $l^i$ vanishes on no one of $v_1,\ldots ,v_r$ and denoting by $F_i$ the $l^i$-antiderivative relative to \eqref{Dec}, we have \[\blen\left(\partial_{l^{e+1}\cdots\widehat{l^i}\cdots l^d}f-F_i\right)=s+1+\varepsilon\;.\]
\end{itemize}
\end{prop}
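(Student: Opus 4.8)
The plan is to argue by induction on $e$. The base case $e=0$ is immediate: then $\partial_{l^1\cdots l^d}f=0$, so the empty power sum decomposition works with $r=0$, and the only point to verify is that for each $i\in\{1,\dots,d\}$ the nonzero linear form $\partial_{l^1\cdots\widehat{l^i}\cdots l^d}f$ has binary length $1=s+1+\varepsilon$, which is clear.

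For the inductive step, assume $e\ge 1$ and that the statement holds with $e-1$ in place of $e$; write $e-1=2s'+\varepsilon'$ and note the two numerical identities $s'+1+\varepsilon'=s+1$ and $\left({s'}^2+3s'+\varepsilon'(s'+2)\right)+(s+1+\varepsilon)=s^2+3s+\varepsilon(s+2)$, both valid whatever the parity of $e$. The inductive hypothesis gives a power sum decomposition $\partial_{l^e\cdots l^d}f=w_1^{e-1}+\cdots+w_{r'}^{e-1}$ with $r'\le {s'}^2+3s'+\varepsilon'(s'+2)$ such that, for every $i\in\{e,\dots,d\}$, $l^i$ vanishes on no $w_j$ and $\blen\left(\partial_{l^e\cdots\widehat{l^i}\cdots l^d}f-G_i\right)=s+1$, where $G_i$ denotes the $l^i$-antiderivative relative to this decomposition.

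Now I would set $g':=\partial_{l^{e+1}\cdots l^d}f-G_e$. Since $\partial_{l^{e+1}\cdots l^d}f$ and $G_e$ are both $l^e$-antiderivatives of $\partial_{l^e\cdots l^d}f$, we have $\partial_{l^e}g'=0$, so $g'$ is essentially binary, and the case $i=e$ of the inductive hypothesis reads exactly $\blen g'=s+1$ (in particular $g'\ne 0$). For each $i\in\{e+1,\dots,d\}$, let $\widehat G_i$ be the $l^i$-antiderivative of $G_e$ relative to the (evident) expansion of $G_e$ over $w_1,\dots,w_{r'}$ --- legitimate, because $l^i$ vanishes on no $w_j$ --- and put $g_i:=\partial_{l^{e+1}\cdots\widehat{l^i}\cdots l^d}f-\widehat G_i$, which is an $l^i$-antiderivative of $g'$. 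A short computation with the explicit antiderivative formulas yields $\partial_{l^e}\widehat G_i=G_i$, hence $\partial_{l^e}g_i=\partial_{l^e\cdots\widehat{l^i}\cdots l^d}f-G_i$, and therefore $\blen\partial_{l^e}g_i=s+1$ by the inductive hypothesis. At this point \autoref{T} applies to $g'\in S_e$, with $l^e,l^{e+1},\dots,l^d$ in the roles of $l^0,l^1,\dots,l^t$ and with the $g_i$ as the prescribed antiderivatives: it yields $g'=u_1^e+\cdots+u_{r''}^e$ with $r''\le s+1+\varepsilon$, with $l^i$ vanishing on no $u_m$, and with $\blen(g_i-\Phi_i)=s+1+\varepsilon$ for each $i\in\{e+1,\dots,d\}$, where $\Phi_i$ is the $l^i$-antiderivative of $g'$ relative to $u_1,\dots,u_{r''}$.

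Finally, I would assemble the conclusion from $\partial_{l^{e+1}\cdots l^d}f=G_e+g'$. Each $w_j$ occurs in $G_e$ with a nonzero coefficient, so replacing it by an $e$-th root of that coefficient times $w_j$ exhibits $G_e$ as a sum of $r'$ $e$-th powers; adjoining $u_1^e,\dots,u_{r''}^e$ gives \eqref{Dec} with $r=r'+r''\le s^2+3s+\varepsilon(s+2)$ by the second identity above. For $i\in\{e+1,\dots,d\}$, $l^i$ vanishes on none of the assembled points by the two vanishing properties already recorded, and the $l^i$-antiderivative $F_i$ relative to the assembled decomposition splits, term by term, as $F_i=\widehat G_i+\Phi_i$; hence $\partial_{l^{e+1}\cdots\widehat{l^i}\cdots l^d}f-F_i=g_i-\Phi_i$, of binary length $s+1+\varepsilon$ by \autoref{T}. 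I expect the only genuine difficulty to be organizational rather than conceptual: keeping the antiderivative bookkeeping straight --- chiefly the identities $\partial_{l^e}\widehat G_i=G_i$ and $F_i=\widehat G_i+\Phi_i$ --- and tracking the parity shift between degrees $e-1$ and $e$ correctly through the numerology; granting those, \autoref{T} delivers precisely the required conclusion.
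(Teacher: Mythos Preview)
Your proof is correct and follows essentially the same inductive scheme as the paper: your $g'$, $g_i$, $G_i$, $\widehat G_i$ correspond exactly to the paper's $g'_e$, $g_i$, $G'_i$, $G_i$ (the paper defines your $\widehat G_i$ directly as the $l^el^i$-antiderivative relative to the degree-$(e-1)$ decomposition, which amounts to the same object), and the application of \autoref{T} and the final assembly are the same. Your treatment is slightly more explicit in two places---you verify the $\blen$ condition in the base case $e=0$ and you spell out the parity bookkeeping $s'+1+\varepsilon'=s+1$ and $\bigl({s'}^2+3s'+\varepsilon'(s'+2)\bigr)+(s+1+\varepsilon)=s^2+3s+\varepsilon(s+2)$---where the paper is terser, but the argument is the same.
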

\begin{proof}
When $e=0$ it suffices to define \eqref{Dec} as the decomposition of $0$ with no summands. By induction, we can assume that $e\ge 1$ and that the proposition holds with $e-1$ in place of $e$. Therefore we get a decomposition
\begin{equation}\label{Decp}
\partial_{l^e\cdots l^d}f={v'_1}^{e-1}+\cdots+{v'_{r'}}^{e-1}
\end{equation}
such that
\begin{equation}\label{rp}
r'\le s^2+2s-1+\varepsilon(s+1)
\end{equation}
and each of $l^e, \ldots, l^d$ vanishes on no one of $v'_1, \ldots, v'_{r'}$. We can also consider for each $i\in\{e,\ldots,d\}$ the $l^i$-antiderivative relative to \eqref{Decp}, which we denote by $G'_i$, and set
\[
g'_i:=\partial_{l^e\cdots\widehat{l^i}\cdots l^d}f-G'_i\;,
\]
so that \[\blen g'_i=s+1\;.\]
For each $i\in\{e+1,\ldots,d\}$, let $G_i$ be the $l^el^i$--antiderivative relative to \eqref{Decp} and set
\[
g_i:=\partial_{l^{e+1}\cdots\widehat{l^i}\cdots l^d}f-G_i\;,
\]
so that \[\partial_{l^e}g_i=g'_i\;,\quad\partial_{l^i}g_i=g'_e\;.\]

By the above construction, we can exploit \autoref{T} with $e$, $g'_e$, $l^e,\ldots,l^d$, $g_{e+1},\dots ,g_d$ in place of $d$, $g'$, $l^0,\ldots,l^t$, $g_1,\ldots,g_t$. We get a decomposition
\begin{equation}\label{Deca}
g'_e={v_1}^e+\cdots+{v_{r''}}^e
\end{equation}
such that
\begin{equation}\label{rs}
r''\le s+1+\varepsilon\;,
\end{equation}
each of $l^{e+1},\ldots,l^d$ vanishes on no one of $v_1,\ldots ,v_{r''}$ and denoting by $H_i$ the $l^i$-antiderivative relative to \eqref{Deca}, we have
\begin{equation}\label{Cbl}
\blen\left(g_i-H_i\right)=s+1+\varepsilon\;.
\end{equation}

Since we defined $G'_e$ as the $l^e$-antiderivative relative to \eqref{Decp}, by taking suitable multiples of $v'_1,\ldots, v'_{r'}$ and calling them $v_{r''+1},\ldots, v_{r''+r'}$, respectively, we have
\[
G'_e={v_{r''+1}}^e+\cdots+{v_r}^e\;,
\]
with $r:=r''+r'$. By definition of $g'_e$ and by \eqref{Deca} we conclude that
\[
\partial_{l^{e+1}\cdots l^d}f=g'_e+G'_e={v_1}^e+\cdots+{v_r}^e\;.
\]
To show that the above is the required decomposition \eqref{Dec}, first note that \eqref{rp} and \eqref{rs} give
\[
r\le s^2+3s+\varepsilon(s+2)\;,
\]
as it was to be shown. Moreover, since each of $l^e, \ldots, l^d$ vanishes on no one of $v'_1, \ldots, v'_{r'}$, which are proportional to $v_{r''+1},\ldots, v_r$, and each of $l^{e+1},\ldots,l^d$ vanishes on no one of $v_1,\ldots ,v_{r''}$, we have that for each $i\in\{e+1,\ldots ,d\}$, $l^i$ vanishes on no one of $v_1,\ldots ,v_r$. Finally, for the $l^i$-antiderivatives $F_i$s we have
\[
F_i=H_i+G_i=H_i+\partial_{l^{e+1}\cdots\widehat{l^i}\cdots l^d}f-g_i\;,
\]
hence
\[
\partial_{l^{e+1}\cdots\widehat{l^i}\cdots l^d}f-F_i=g_i-H_i
\]
and the last requirement to be fulfilled follows from \eqref{Cbl}.
\end{proof}

\begin{prop}\label{Bound}
When $\dim S_1=3$, $d>0$, for all $f\in S_d$ we have
\[
\rk f\le\left\lfloor\frac{d^2+6d+1}4\right\rfloor\;.
\]
\end{prop}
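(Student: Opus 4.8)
The plan is to deduce the bound directly from Propositions~\ref{Lines} and~\ref{Main}, after disposing of the degenerate cases by hand.

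First I would observe that if $f=0$ or $f$ is a $d$th power of a linear form, then $\rk f\le 1$, while $d\ge 1$ forces $\bigl(d^2+6d+1\bigr)/4\ge 8/4=2$, so the inequality holds trivially; note this also settles the case $d=1$, in which every form of $S_1$ is either $0$ or a first power. Hence I may assume that $f$ is not a power of a linear form, and in particular (again because in degree $1$ there is nothing else) that $d\ge 2$, so that Proposition~\ref{Lines} applies.

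Next I would invoke Proposition~\ref{Lines} with $a=0$ and $b=0$ (no auxiliary forms $f_j$, no forbidden points $x^k$): since $f$ is not a power of a linear form, this yields distinct $\Span{l^1},\ldots,\Span{l^d}\in\Ps S^1$ with $\partial_{l^1\cdots l^d}f=0$ and $\partial_{l^1\cdots\widehat{l^i}\cdots l^d}f\ne 0$ for every $i$. These are exactly the hypotheses of Proposition~\ref{Main}, which I would then use with $e=d$; writing $d=2s+\varepsilon$ with $\varepsilon\in\{0,1\}$, the product $l^{d+1}\cdots l^d$ is empty, so $\partial_{l^{d+1}\cdots l^d}f=f$ and \eqref{Dec} becomes a genuine power sum decomposition $f=v_1^d+\cdots+v_r^d$ with $r\le s^2+3s+\varepsilon(s+2)$; the remaining conditions of Proposition~\ref{Main} concern indices $i\in\{d+1,\ldots,d\}$ and are vacuous. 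Thus $\rk f\le s^2+3s+\varepsilon(s+2)$.

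Finally I would carry out the arithmetic check. If $\varepsilon=0$ then $d=2s$ and $s^2+3s+\varepsilon(s+2)=s^2+3s=\bigl\lfloor(4s^2+12s+1)/4\bigr\rfloor=\bigl\lfloor(d^2+6d+1)/4\bigr\rfloor$; if $\varepsilon=1$ then $d=2s+1$ and $s^2+3s+\varepsilon(s+2)=s^2+4s+2=(4s^2+16s+8)/4=(d^2+6d+1)/4$, which is already an integer. In either case $\rk f\le\bigl\lfloor(d^2+6d+1)/4\bigr\rfloor$, as wanted. There is no real obstacle here beyond bookkeeping: the substantive work is entirely in Lemmas~\ref{H} and~\ref{T} and in Propositions~\ref{Lines} and~\ref{Main}, and the only point requiring care is checking that the degenerate inputs (the case of a power, the empty products with $e=d$) are legitimate instances of those statements. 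The fact that the floor is attained exactly, rather than with slack, is itself informative: it shows that this particular route to the bound cannot be sharpened without improving the count in Proposition~\ref{Main}, which, as the introduction explains, would presumably require a better choice of the lines.
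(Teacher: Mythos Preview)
Your proof is correct and follows essentially the same route as the paper: dispose of the power case, apply \autoref{Lines} with $a=b=0$, then \autoref{Main} with $e=d$. You supply a bit more detail than the paper does (the observation that the non-power assumption forces $d\ge 2$ so that \autoref{Lines} applies, and the explicit arithmetic matching $s^2+3s+\varepsilon(s+2)$ with $\lfloor(d^2+6d+1)/4\rfloor$), but the argument is the same.
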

\begin{proof}
If $f$ is a $d$th power then $\rk f\le 1$ and the result trivially follows. Hence we can assume that $f$ is not a $d$th power. Exploiting \autoref{Lines} with $a=b=0$, we get $\Span{l^1},\ldots,\Span{l^d}\in\Ps S^1$ such that
\[
\partial_{l^1\cdots l^d}f=0\;,\qquad\partial_{l^1\cdots\widehat{l^i}\cdots l^d}f\ne 0\;\forall i\in\{1,\ldots ,d\}\;.
\]
Now the result immediately follows from \autoref{Main} with $e:=d$.
\end{proof}

\begin{prop}
We have $\rmax(3,d)=d^2/4+O(d)$.
\end{prop}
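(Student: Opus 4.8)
The plan is to deduce the asymptotic statement $\rmax(3,d)=d^2/4+O(d)$ by squeezing $\rmax(3,d)$ between a lower bound and an upper bound, both of which differ from $d^2/4$ only by terms linear in $d$. The upper bound is immediate: \autoref{Bound} gives $\rk f\le\left\lfloor\left(d^2+6d+1\right)/4\right\rfloor$ for every $f\in S_d$ when $\dim S_1=3$, hence by definition $\rmax(3,d)\le\left\lfloor\left(d^2+6d+1\right)/4\right\rfloor=d^2/4+O(d)$. For the lower bound, I would invoke the estimate already recorded in the introduction, namely $\rmax(3,d)\ge\left\lfloor(d+1)^2/4\right\rfloor$, which follows from \cite[Proposition~4.1]{CCG} (equivalently \cite[Theorem~7]{BBT} or \cite[Theorem~1]{BuT}) applied to a suitable monomial or to a form of maximal monomial rank. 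That lower bound is also $d^2/4+O(d)$.

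Concretely, the proof is a two-line sandwiching argument. First I would write, for all $d>0$,
\[
\left\lfloor\frac{(d+1)^2}{4}\right\rfloor\le\rmax(3,d)\le\left\lfloor\frac{d^2+6d+1}{4}\right\rfloor\,,
\]
citing the introduction's remarks for the left inequality and \autoref{Bound} together with the definition of $\rmax$ for the right one. Then I would observe that both outer quantities are of the form $d^2/4+O(d)$: indeed $0\le\left\lfloor(d^2+6d+1)/4\right\rfloor-d^2/4\le(6d+1)/4$ and $-1\le\left\lfloor(d+1)^2/4\right\rfloor-d^2/4\le(2d+1)/4$, so that $\rmax(3,d)-d^2/4$ is bounded in absolute value by a linear function of $d$. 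Hence $\rmax(3,d)=d^2/4+O(d)$, which in the notation of the introduction says $c_3=1/4$.

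I do not expect any genuine obstacle here: the entire technical content — the arrangement of decompositions along $d$ lines, \autoref{H}, \autoref{T}, \autoref{Lines}, and \autoref{Main} — has already been spent in establishing \autoref{Bound}, and the lower bound is quoted from the literature. The only minor point to be careful about is that the $O(d)$ estimate must hold uniformly for all $d\ge 1$ (not merely asymptotically), which is why I would exhibit explicit linear bounds on the floor functions rather than appealing vaguely to ``leading order''; this also makes transparent the earlier claim that $1/4\le c_3\le 1/3$, since the upper bound $\left\lfloor(d^2+6d+1)/4\right\rfloor$ is well below $d^2/3$ for large $d$ while the lower bound forces $c_3\ge 1/4$. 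One could also remark, for completeness, that the case $d=1$ is trivial ($\rmax(3,1)=1$) and is consistent with both inequalities, so no separate treatment of small degrees is needed.
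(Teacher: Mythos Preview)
Your proposal is correct and follows exactly the same approach as the paper's own proof, which simply states that the result is an immediate consequence of \autoref{Bound} together with \cite[Proposition~4.1]{CCG} (with the same alternative references \cite[Theorem~7]{BBT}, \cite[Theorem~1]{BuT}). You have merely spelled out the sandwiching and the explicit linear bounds on the floor functions that the paper leaves implicit.
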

\begin{proof}
An immediate consequence of \autoref{Bound} together with \cite[Proposition~4.1]{CCG} (see also \cite[Theorem~7]{BBT}, \cite[Theorem~1]{BuT}).
\end{proof}

\end{document}